\definecolor{refkey}{rgb}{0,1,1}
\definecolor{labelkey}{rgb}{1,0,0}
\journal{arXiv}
\newtheorem{thm}{Theorem}
\newtheorem{cor}{Corollary}
\newtheorem{prop}[thm]{Proposition}
\newtheorem{Ex}{Example}
\numberwithin{equation}{section}
\newcommand{\eq} [1] {\begin{equation}\label{#1}\quad}
\newcommand{\en} {\end{equation}}
\newcommand{\scal}[1]{\langle#1\rangle}
\newcommand{\norm}[1]{\left\Vert#1\right\Vert}
\newcommand{\abs}[1]{\left\vert#1\right\vert}
\newcommand{\ceil}[1]{\left\lceil#1\right\rceil}
\newcommand{\floor}[1]{\left\lfloor#1\right\rfloor}
\newcommand{\C}{\mathbb C}
\newcommand{\M}{{\bf M}}
\renewcommand{\Re}{\operatorname{Re}}
\newcommand{\im}{\operatorname{Im}}
\newcommand{\conv}{\operatorname{conv}}
\newcommand{\re}{\operatorname{Re}}
\begin{document}

\begin{frontmatter}

\title{On Kippenhahn curves and higher-rank \\ numerical ranges of some matrices \tnoteref{support}}

\author[MUC]{Nat\'alia Bebiano}
\address[MUC]{Departamento de Matem\'atica, Universidade da Coimbra, Portugal}
\ead{bebiano@mat.uc.pt}
\author[FUC]{Jo\'ao da Provid\'encia}
\address[FUC]{Departamento de F\'isica, Universidade da Coimbra, Portugal}
\ead{providencia@uc.pt}
\author[nyuad]{Ilya M. Spitkovsky}
\ead{ims2@nyu.edu, ilya@math.wm.edu, imspitkovsky@gmail.com}
\address[nyuad]{Division of Science and Mathematics, New York  University Abu Dhabi (NYUAD), Saadiyat Island,
P.O. Box 129188 Abu Dhabi, United Arab Emirates}
\tnotetext[support]{The work of the first author [NB] was supported by the Centre for Mathematics of the University of Coimbra ---
UIDB/00324/2020, funded by the Portuguese Government through FCT/MCTES. The third author [IMS] was supported in part by Faculty Research funding from the Division of Science and Mathematics, New York University Abu Dhabi.}

\begin{abstract}
The higher rank numerical ranges of generic matrices are described in terms of the components of their Kippenhahn curves. Cases of tridiagonal (in particular, reciprocal) 2-periodic matrices are treated in more detail.
\end{abstract}

\end{frontmatter}

\section{Introduction} 

Let $\M_n$ stand for the algebra of all $n$-by-$n$ matrices with the entries $a_{ij}\in\C$, $i,j=1,\ldots n$. We will identify $A\in\M_n$ with a linear operator acting on $\C^n$, the latter being equipped with the standard scalar product $\scal{.,.}$ and the associated norm $\norm{x}:=\scal{x,x}^{1/2}$. The {\em numerical range} of $A$ is defined as \eq{nr} W(A)=\{\scal{Ax,x}\colon \norm{x}=1 \}, \en see e.g. \cite[Chapter 1]{HJ2} or more recent \cite[Chapter~6]{DGSV}  for the basic properties of $W(A)$, in particular its convexity and invariance under unitary similarities. 

In \cite{ChoKriZy06}, this notion was generalized as follows: the {\em rank-$k$ numerical range} of $A$ is \eq{knr} \Lambda_k(A)= \{ \lambda\in\C\colon PAP=\lambda P \text{ for some rank-}k \text{ orthogonal projection } P\}. \en 

Of course, \eq{chain} W(A)=\Lambda_1(A)\supseteq\Lambda_2(A)\supseteq \cdots\supseteq \Lambda_n(A). \en For $k>n/2$ the set $\Lambda_k(A)$ is empty or a singleton $\{\lambda_0\}$; in the latter case $\lambda_0$ is an eigenvalue of $A$ having geometric multiplicity at least $2k-n$ \cite[Proposition 2.2]{ChoKriZy06}.  In particular, $\Lambda_n(A)\neq\emptyset$ if and only if $A$ is a scalar multiple of the identity, and then all the sets in \eqref{chain} coincide. 

So, for $k=1$ and $k>n/2$ the sets $\Lambda_k(A)$ are convex. Their convexity for intermediate values of $k$ was established in \cite{Woe08}. Shortly thereafter, in \cite{LiSze08} it was shown that, moreover,
\eq{knrint}\Lambda_k(A)=\bigcap_{\theta\in [0,2\pi)} \{ \mu\in\C\colon \Re (e^{i\theta}\mu)\leq\lambda_k(\theta)\},  \en
where $\lambda_k(\theta)$ stands for the $k$-th largest  (counting the multiplicities) eigenvalue of the matrix $\Re(e^{i\theta}A)$.
As usual, for any $X\in\M_n$ 
 \[ \re X=\frac{X+X^*}{2},\quad  \im X=\frac{X-X^*}{2i}.\]

When applied to normal matrices, \eqref{knrint} yields 
\eq{norm} \Lambda_k(N)=\cap\conv\{\lambda_{j_1},\ldots,\lambda_{j_{n-k+1}}\}, \en
with the intersection taken over all $(n-k+1)$-tuples from the spectrum $\sigma(N)$ of a normal matrix $N$.  This result is also from \cite{LiSze08}, confirming a conjecture from \cite{CHKZ}. 

Our next observation is that the boundary lines \eq{ell} \ell_{\theta,k}= \{ \mu\in\C\colon \Re (e^{i\theta}\mu)=\lambda_k(\theta)\} \en
of the half-planes in the right hand side of \eqref{knrint}, when taken for all $k=1,\ldots,n$, form a family the envelope of which is the so called {\em Kippenhahn curve} $C(A)$ of the matrix $A$. It was shown in \cite{Ki} (see also the English translation \cite{Ki08}) that $W(A)=\conv C(A)$. From the discussion above it is clear that, at least in principle, not only $W(A)$ but all the rank-$k$ numerical ranges of $A$ can be described in terms of $C(A)$. 

Section~\ref{s:gen} is devoted to generic matrices, for which $C(A)$ splits into $\ceil{n/2}$ components, each solely responsible for the respective higher rank numerical range. These results are specified further in Section~\ref{s:tri} for the case of tridiagonal 2-periodic matrices, when explicit formulas for $\lambda_k(\theta)$ are known. Finally, a particular case of reciprocal 2-periodic matrices is treated in Section~\ref{s:rec}.

\section{Generic matrices}\label{s:gen}

For $n=2$, there are only two sets in the chain \eqref{chain}, both easily identifiable. If $n=3$, the middle term is either a singleton or the empty set (since $2>3/2$). The next proposition allows to distinguish between the two possibilities.
\begin{prop}Let $A\in\M_3$. Then $\Lambda_2(A)\neq\emptyset$ if and only if $W(A)$ is an elliptical disk, possibly degenerating into a line segment.  \label{th:k2n3} \end{prop}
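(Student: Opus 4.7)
The plan is to derive, in each direction, an explicit formula for the middle eigenvalue $\lambda_2(\theta)$ of $\Re(e^{i\theta}A)$ and then invoke \eqref{knrint}.

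For the ``if'' direction, assume $W(A)$ is an elliptical disk with centre $c$ and write its support function as $\lambda_1(\theta) = \Re(e^{i\theta}c) + h_{\mathcal E}(\theta)$, where $\mathcal E = W(A) - c$ is symmetric about the origin. The identities $\lambda_3(\theta) = -\lambda_1(\theta+\pi)$ (immediate from the definition of $\lambda_3$ as the smallest eigenvalue) and $h_{\mathcal E}(\theta+\pi) = h_{\mathcal E}(\theta)$ (central symmetry of $\mathcal E$) together yield $\lambda_1(\theta) + \lambda_3(\theta) = 2\Re(e^{i\theta}c)$; subtracting this from $\lambda_1 + \lambda_2 + \lambda_3 = \Re(e^{i\theta}\tr A)$ gives $\lambda_2(\theta) = \Re(e^{i\theta}(\tr A - 2c))$, whence $\tr A - 2c \in \Lambda_2(A)$ by \eqref{knrint}.

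For the converse, suppose $\lambda_0 \in \Lambda_2(A)$, and after replacing $A$ by $A - \lambda_0 I$ assume $\lambda_0 = 0$. By \eqref{knr} there is an orthonormal basis of $\C^3$ in which the upper-left $2 \times 2$ block of $A$ vanishes; the same then holds for $\Re(e^{i\theta}A)$, and expanding its characteristic polynomial along the zero block produces the factorisation $t\bigl(t^2 - h_{33}(\theta)t - (|h_{13}(\theta)|^2 + |h_{23}(\theta)|^2)\bigr)$. The non-positive constant term of the quadratic factor forces its roots to straddle the origin, so $\lambda_2(\theta) \equiv 0$ and $\lambda_1(\theta) = \tfrac12 h_{33}(\theta) + \tfrac12\sqrt{D(\theta)}$ with $D(\theta) := h_{33}(\theta)^2 + 4(|h_{13}(\theta)|^2 + |h_{23}(\theta)|^2)$. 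Expanding the entries via $h_{ij}(\theta) = (e^{i\theta}a_{ij} + e^{-i\theta}\overline{a_{ji}})/2$ puts $D(\theta)$ in the form $C_0 + \Re(C_1 e^{2i\theta})$ with $C_0 \geq |C_1| \geq 0$, so $\tfrac12\sqrt{D(\theta)}$ is the support function of an origin-centred ellipse (degenerate to a segment when $C_0 = |C_1|$, or to a point when $C_0 = 0$). Combined with $\tfrac12 h_{33}(\theta) = \Re(e^{i\theta}\tr A/2)$, the support function of the singleton $\{\tr A/2\}$, this exhibits $W(A)$ as the Minkowski sum of a point and an ellipse, i.e., as an elliptical disk.

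The main technical step is the last one — recognising $\tfrac12\sqrt{D(\theta)}$ as the support function of an ellipse. This reduces, via the identity $\alpha^2\cos^2\theta + \beta^2\sin^2\theta = \tfrac12(\alpha^2+\beta^2) + \tfrac12(\alpha^2-\beta^2)\cos 2\theta$, to the inequality $C_0 \geq |C_1|$, which is itself a consequence of $D(\theta) \geq 0$.
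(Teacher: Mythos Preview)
Your proof is correct and proceeds along a genuinely different route from the paper's. The paper argues structurally: it observes that $\Lambda_2(A)=\{\lambda\}$ is equivalent to $A$ being unitarily similar to a matrix with a repeated diagonal entry $\lambda$ and a specific zero pattern, and then invokes external results (\cite[Lemma~8]{BS041} and \cite[Theorem~4.2]{BS04}) to conclude that such matrices have elliptical numerical range. Your argument, by contrast, is entirely analytic and self-contained: in the ``if'' direction you exploit the central symmetry of an ellipse together with the trace identity to pin down $\lambda_2(\theta)$ exactly, and in the converse you factor the characteristic polynomial of $\Re(e^{i\theta}A)$ explicitly and recognise the resulting support function as that of an ellipse. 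Two bonuses of your approach are worth noting: it yields the explicit formula $\Lambda_2(A)=\{\operatorname{tr}A-2c\}$ for the unique element in terms of the centre $c$ of $W(A)$; and your ``if'' argument actually shows more, namely that \emph{any} $A\in\M_3$ whose numerical range is centrally symmetric has $\Lambda_2(A)\neq\emptyset$ (which, combined with the converse, recovers the fact that central symmetry of $W(A)$ already forces ellipticity in dimension three). The paper's approach, on the other hand, ties the result directly to the canonical-form classification of $3\times 3$ matrices and makes the unitary (ir)reducibility dichotomy visible.
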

\begin{proof}Directly from the definition it follows that $\Lambda_2(A)$ is a singleton $\{\lambda\}$ if and only if $A$ is unitarily similar to $\begin{bmatrix}\lambda & 0 & x \\ 0 & \lambda & y \\ u & v & z\end{bmatrix}$. Applying another unitary similarity if needed, we may without loss of generality suppose that $u=0$.

{\sl Case 1.} $x=0$. Then $A=(\lambda)\oplus B$, where  $B=\begin{bmatrix}\lambda & y\\ v & z\end{bmatrix}$, and $W(A)=W(B)$ is either an elliptical disk or a line segment, depending on whether or not $B$ is normal. 

{\sl Case 2.} $x\neq 0$. Then $A$ is unitarily similar to the tridiagonal matrix $\begin{bmatrix}\lambda & x & 0 \\ 0 & z & v \\ 0 & y & \lambda\end{bmatrix}$ with $(1,2)$- and $(2,1)$-entries having distinct absolute values. According to \cite[Lemma 8]{BS041}, $A$ is unitarily irreducible. On the other hand, its $(1,1)$- and $(3,3)$-entries coincide, which implies the ellipticity of $W(A)$ \cite[Theorem 4.2]{BS04}.
\end{proof} 	
	
Recall that a matrix $A\in\M_n$ is {\em generic} if $\lambda_1(\theta),\ldots,\lambda_n(\theta)$ are distinct for all $\theta$. 

Normal matrices are not generic; for $n=2$ the converse is also true. Hence, there is a direct relation with the shape of the numerical range: $A\in\M_2$ is generic if and only if $W(A)$ is a non-degenerate elliptical disc. Already for $n=3$, things get more subtle. 
\begin{prop}\label{th:gen3}Let $A\in\M_3$. Then $A$ is generic if and only if $W(A)$:\\ {\rm (i)}   has an ovular shape, or\\  {\rm (ii)}  is an ellipse with no eigenvalues of $A$ lying on its boundary.  \end{prop}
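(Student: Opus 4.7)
The plan is a case analysis on the shape of $W(A)$. By Kippenhahn's classification, for $A\in\M_3$ the range $W(A)$ falls into one of four categories: (a) a point, segment, or triangle (when $A$ is normal); (b) a non-degenerate elliptical disk; (c) an ovular region, with $C(A)$ a smooth irreducible cubic envelope; or (d) a region whose boundary exhibits an inward cusp or a flat portion, corresponding to $C(A)$ an irreducible cubic with a singular point. I aim to show that $A$ is generic exactly when (c) holds, or (b) holds with no eigenvalue of $A$ on $\partial W(A)$.

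For the ``not generic'' direction I would address (a), the bad sub-case of (b), and (d). In case (a), $A$ normal implies $\Re(e^{i\theta}A)$ has eigenvalues $\Re(e^{i\theta}\lambda_j)$ for $\lambda_j\in\sigma(A)$; choosing $\theta$ so that $e^{-i\theta}$ is perpendicular to $\lambda_i-\lambda_j$ for some pair (or any $\theta$ if an eigenvalue is already repeated) forces a coincidence. For case (b) with $\lambda\in\sigma(A)\cap\partial W(A)$, a standard result identifies $\lambda$ as a reducing eigenvalue, so $A=(\lambda)\oplus B$ with $B\in\M_2$ and $\lambda\in\partial W(B)=\partial W(A)$; picking $\theta$ so that $e^{-i\theta}$ is the outer normal to the ellipse at $\lambda$ gives $\Re(e^{i\theta}\lambda)=\lambda_1(B,\theta)$, forcing $\lambda_1(A,\theta)=\lambda_2(A,\theta)$. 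In case (d), a singular point of $C(A)$ translates, via the duality between $C(A)$ and the characteristic polynomial of $\Re(e^{i\theta}A)$, into a direction $\theta_0$ at which $\lambda_k(\theta_0)=\lambda_{k+1}(\theta_0)$ for some $k$.

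For the ``generic'' direction, case (c) is essentially immediate: an ovular $C(A)$ is smooth of class $3$, so each direction produces three distinct tangent lines, hence three distinct $\lambda_k(\theta)$. For case (b) without boundary eigenvalues, I split by reducibility. If $A=(\lambda)\oplus B$, then $\lambda$ lies strictly inside $W(B)=W(A)$; since $W(B)$ is a non-degenerate ellipse, $B$ is non-normal, so $\Re(e^{i\theta}B)$ has two distinct eigenvalues for every $\theta$, and $\Re(e^{i\theta}\lambda)$ is strictly between them, giving three distinct $\lambda_k(\theta)$. If $A$ is irreducible, Kippenhahn's factorization forces $F_A:=\det(xI+y\,\Re A+z\,\im A)$ to split as a linear factor times an irreducible quadratic (the quadratic carrying the ellipse), and the zero of the linear factor is an eigenvalue of $A$ which, by hypothesis, lies strictly inside the ellipse; the same distinctness argument then applies.

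The main obstacle is the irreducible elliptical sub-case, where I must verify that the eigenvalue coming from the linear factor of $F_A$ cannot lie on $\partial W(A)$ without forcing reducibility of $A$; this is precisely what the ``no boundary eigenvalue'' hypothesis excludes. A secondary technical point is making the duality between singular points of $C(A)$ and coincidences $\lambda_k(\theta_0)=\lambda_{k+1}(\theta_0)$ explicit enough to cover case (d) cleanly, for which the description of $C(A)$ as the envelope of the lines $\ell_{\theta,k}$ from \eqref{ell} is exactly the right tool.
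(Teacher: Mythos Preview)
Your approach is correct but organized differently from the paper's. The paper partitions by unitary reducibility rather than by the shape of $W(A)$: for unitarily irreducible $A$ it simply invokes \cite[Proposition~3.2]{KRS}, which says that such $A$ is generic if and only if $\partial W(A)$ has no flat portions (hence $W(A)$ is ovular or elliptical), and observes that irreducibility automatically keeps all eigenvalues off $\partial W(A)$; normal $A$ are dismissed as non-generic; and in the remaining case $A=(\lambda)\oplus B$ with $B\in\M_2$ non-normal it checks directly that genericity holds exactly when $\lambda$ lies in the interior of the ellipse $W(B)$. Your shape-based case split recovers the same facts by hand---your flat-portion argument in (d) is precisely the observation that a flat edge of $\partial W(A)$ forces $\lambda_1(\theta_0)=\lambda_2(\theta_0)$, and your treatment of (b) reproduces the paper's reducible analysis---so the two proofs are equivalent in content. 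The paper's route is shorter because the KRS citation absorbs your cases (c) and (d) at once; yours is more self-contained but needs two cosmetic fixes: $W(A)$ is convex, so ``inward cusp'' in (d) should just read ``flat portion'' (corners of $W(A)$ are normal eigenvalues, hence already covered by the reducible cases), and it is the primal characteristic curve $\det(uI+v\,\Re A+w\,\im A)=0$ that is the cubic, not $C(A)$ itself---so your duality heuristic in (d) is better replaced by the direct flat-edge $\Rightarrow$ repeated-top-eigenvalue argument you already have.
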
 
Note that $A$ is unitarily irreducible in case (i) while it may or may not be unitarily reducible (though not normal) in case (ii). 
\begin{proof} If $A$ is unitarily irreducible, according to \cite[Proposition 3.2]{KRS} it is generic if and only if $W(A)$ has no flat portions on the boundary. These are exactly ovular and elliptical shapes, as per Kippenhahn's classification. Moreover, unitary irreducibility of $A$ implies that its eigenvalues are not on the boundary. 

Normal matrices are not generic, as was mentioned earlier. In the remaining case, $W(A)$ is the convex hull of an ellipse $E$ and a normal eigenvalue $\lambda$ of $A$. The matrix is generic if $\lambda$ lies in the interior of $E$, which falls under (ii), and non-generic otherwise. \end{proof} 	

Comparing Propositions~\ref{th:k2n3} and \ref{th:gen3}, we see that for $A\in\M_3$ non-empty and empty $\Lambda_2(A)$ materialize both for generic and non-generic matrices. 

\begin{Ex}\label{ex1}  Let

$$
M_1=
\left[\begin{matrix}
0&-1/2& 0\\
2&0&-1/2\\
0&1/2& \sqrt2\\
\end{matrix}\right],~~
M_2=
\left[\begin{matrix}
0&1/2& 0\\
1/2&0&2\\
0&1& 0\\
\end{matrix}\right].~~
$$
Figure~\ref{fig2} refers to the matrix $M_1$ and Figure~\ref{fig3} refers to the matrix $M_2$. Observe that $W(M_1)$ is ovular,
$\Lambda_2(M_1)=\emptyset$, while $W(M_2)$ is elliptical and $\Lambda_2(M_2)=\{0\}$ is the eigenvalue of $M_2$ different from the foci $\pm 3/2$ of $W(M_2)$.

\begin{figure}[h]
\centering
\includegraphics[width=0.5\linewidth,angle=0]{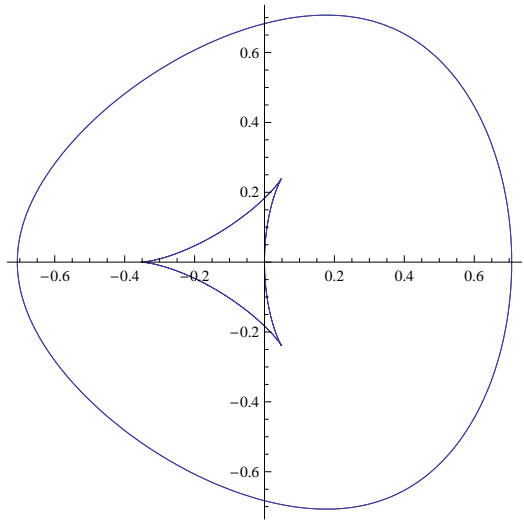}
\caption{Kippenhahn curve of $M_1$}
\label{fig2}
\end{figure}

\begin{figure}[h]
\centering
\includegraphics[width=0.6\linewidth,angle=0]{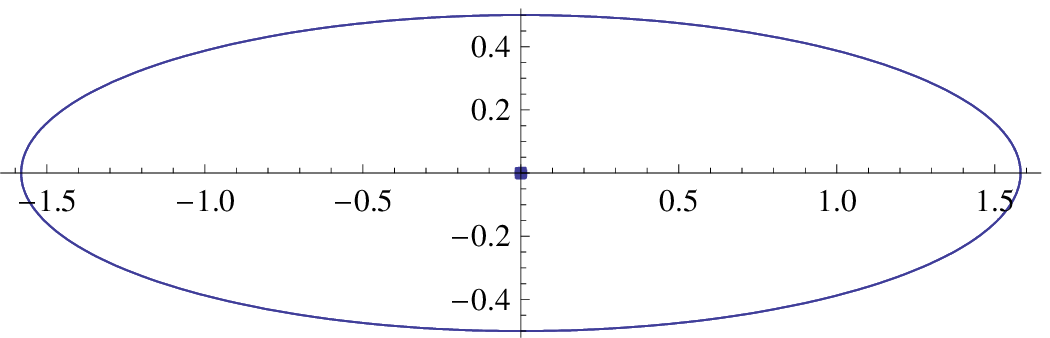}
\caption{Kippenhahn curve of $M_2$}
\label{fig3}
\end{figure}
\end{Ex}


Returning to generic matrices of arbitrary dimension $n$, note that from their definition it immediately follows that
\eq{op} \lambda_k(\theta)=-\lambda_{n-k+1}(\theta+\pi),\quad k=1,\ldots,n. \en
Since $\lambda_{n-k+1}(\theta)>\lambda_k(\theta)$ for $k>\ceil{n/2}$, the half-planes corresponding to $\theta$ and $\theta+\pi$ in \eqref{knrint} are disjoint. Therefore, the rank-$k$ numerical ranges of generic matrices $A$ are empty for $k>\ceil{n/2}$. On the other hand, directly from \eqref{knrint} we see that for generic matrices $A$ the inclusions in \eqref{chain} are proper for $k=1,\ldots,\ceil{n/2}$; moreover, $\Lambda_{k+1}(A)$ lies in the interior of $\Lambda_k(A)$.  

The structure of $C(A)$ and the related description of $\Lambda_k(A)$ for $k\leq\ceil{n/2}$ are as follows. 

\begin{thm}\label{th:Agen}For a generic matrix  $A\in\M_n$ its Kippenhahn curve $C(A)$ consists of the closed components
	\[ \gamma_k(A)=\{\scal{Az_k(\theta),z_k(\theta)}\colon \theta\in [0,2\pi]\}, \quad k=1,\ldots,\ceil{n/2}, \]
	where $z_k(\theta)$ is the unit eigenvector associated with the eigenvalue $\lambda_k(\theta)$ of $\re(e^{i\theta}A)$. Respectively, the half-planes in the representation \eqref{knrint} of $\Lambda_k(A)$ are bounded by the family \eqref{ell} of the tangent lines of $\gamma_k(A)$. \end{thm}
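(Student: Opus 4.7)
My plan is to exhibit $\gamma_k(A)$ as the envelope of the one-parameter family $\{\ell_{\theta,k}\}_{\theta\in[0,2\pi)}$, separately for each $k$, and then read off the two assertions of the theorem from this description.

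First I would exploit genericity to ensure regularity. Since $\lambda_k(\theta)$ is a simple eigenvalue of $\re(e^{i\theta}A)$ for every $\theta$, standard analytic perturbation theory gives a locally analytic choice of the unit eigenvector $z_k(\theta)$. The map $\theta\mapsto \scal{Az_k(\theta),z_k(\theta)}$ is insensitive to the phase of $z_k$, so it is a well-defined real-analytic $2\pi$-periodic function of $\theta$; call it $\mu_k(\theta)$. The computation $\re(e^{i\theta}\mu_k(\theta))=\scal{\re(e^{i\theta}A)z_k,z_k}=\lambda_k(\theta)$ already shows that $\mu_k(\theta)\in\ell_{\theta,k}$, so $\gamma_k(A)\subseteq\bigcup_\theta\ell_{\theta,k}$.

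Next I would do the envelope computation and match it with $\mu_k(\theta)$. The classical envelope conditions $F(\theta,\mu)=0$, $\partial F/\partial\theta=0$ applied to $F(\theta,\mu)=\re(e^{i\theta}\mu)-\lambda_k(\theta)$ yield the parametrization $\mu=e^{-i\theta}(\lambda_k(\theta)-i\lambda_k'(\theta))$. To identify this with $\mu_k(\theta)$, I would differentiate the eigenvalue equation $\re(e^{i\theta}A)z_k(\theta)=\lambda_k(\theta)z_k(\theta)$ in $\theta$, use $\frac{d}{d\theta}\re(e^{i\theta}A)=-\im(e^{i\theta}A)$, and take the inner product with $z_k(\theta)$; the self-adjointness of $\re(e^{i\theta}A)$ eliminates the $\scal{z_k',z_k}$ cross terms and delivers $\scal{\im(e^{i\theta}A)z_k,z_k}=-\lambda_k'(\theta)$. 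Since this equals $\im(e^{i\theta}\mu_k(\theta))$, combined with the real-part identity above I get $e^{i\theta}\mu_k(\theta)=\lambda_k(\theta)-i\lambda_k'(\theta)$, matching the envelope formula exactly. Thus $\gamma_k(A)$ is the full envelope of the $k$-th family and each $\ell_{\theta,k}$ is tangent to it at $\mu_k(\theta)$.

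Finally I would count and assemble. Periodicity of $\lambda_k$ and of the spectral projection onto $z_k$ makes $\gamma_k(A)$ a closed curve. The identity \eqref{op} gives $\mu_{n-k+1}(\theta+\pi)=\mu_k(\theta)$ after a short computation, so $\gamma_{n-k+1}(A)=\gamma_k(A)$ as sets and only indices $k=1,\dots,\lceil n/2\rceil$ produce distinct components; genericity (distinct $\lambda_k(\theta)$ at every $\theta$) prevents two of these components from merging. Since $C(A)$ is by definition the envelope of the full family $\{\ell_{\theta,k}\}_{\theta,k}$, the union $\bigcup_{k=1}^{\lceil n/2\rceil}\gamma_k(A)$ accounts for all of $C(A)$. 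The second assertion of the theorem is then immediate from \eqref{knrint}: the bounding line of the $k$-th half-plane is precisely $\ell_{\theta,k}$, which has just been shown to be tangent to $\gamma_k(A)$.

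The main obstacle is the envelope-identification step: verifying that the phase of $e^{i\theta}\mu_k(\theta)$ given by differentiating the eigenvalue equation really produces $-i\lambda_k'(\theta)$. This hinges on analytic dependence of $z_k(\theta)$, which in turn is what the genericity hypothesis is there to guarantee; without simple eigenvalues the differentiation step breaks down at the crossings and the components $\gamma_k$ are not even well-defined as separate curves.
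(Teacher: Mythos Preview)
Your argument is correct: the envelope computation, the Hellmann--Feynman style identity $\scal{\im(e^{i\theta}A)z_k,z_k}=-\lambda_k'(\theta)$, and the pairing $\gamma_{n-k+1}=\gamma_k$ via \eqref{op} all go through as you describe. One small remark: the clause ``genericity prevents two of these components from merging'' is not strictly needed for the theorem as stated (which only asserts that $C(A)$ is the union of the $\gamma_k$), and distinctness of the $\lambda_k(\theta)$ at each fixed $\theta$ does not by itself rule out coincidence of two envelope curves as point sets; you may simply drop that sentence.

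The paper takes a different route: it does not carry out any of this computation, but instead observes that the first assertion is a restatement of \cite[Theorem~13]{JAG98} (combined with \eqref{op} to halve the index range), and that the second assertion is then immediate from \eqref{knrint}. Your approach is self-contained and makes transparent exactly where the genericity hypothesis enters (analytic dependence of $z_k(\theta)$, hence differentiability of $\lambda_k$); the paper's approach is shorter but relies on the reader consulting the cited source for the differential-topological content.
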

The first statement is a rewording (in different terms) of \cite[Theorem 13]{JAG98}, based in particular on \eqref{op}; the second immediately follows from the first. 

For $n$ odd and $k=\ceil{n/2}$ from \eqref{knrint}, \eqref{op} it can be seen that in fact $\Lambda_k(A)$ is the intersection of the tangent lines $\ell_{\theta,k}$ to $\gamma_k(A)$ defined by \eqref{ell}. This yields the following test for distinguishing between $\Lambda_{\ceil{n/2}}$ being a singleton or the empty set.  
\begin{cor}\label{th:midodd}Let $A\in\M_n$ be generic. If $n$ is odd, then $\Lambda_{\ceil{n/2}}(A)=\gamma_{\ceil{n/2}}(A)$ 
if $\gamma_{\ceil{n/2}}(A)$ is a point, and $\Lambda_{\ceil{n/2}}(A)=\emptyset$ otherwise.  \end{cor}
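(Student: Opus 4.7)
The plan is to start from the reduction already noted in the paragraph preceding the statement: for $n$ odd and $k=\ceil{n/2}$ we have $n-k+1=k$, so \eqref{op} gives $\lambda_k(\theta+\pi)=-\lambda_k(\theta)$, and the half-planes in \eqref{knrint} at angles $\theta$ and $\theta+\pi$ share the common boundary line $\ell_{\theta,k}$ but have opposite interiors. This writes $\Lambda_k(A)=\bigcap_{\theta\in[0,\pi)}\ell_{\theta,k}$. Because distinct $\theta,\theta'\in[0,\pi)$ give lines with distinct unit normals $e^{i\theta}\neq e^{i\theta'}$, the intersection of just two of these lines already reduces to a single point; hence $\Lambda_k(A)$ must be either empty or a singleton, and only the question of which alternative occurs remains.

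For the ``if'' direction, I would assume $\gamma_k(A)=\{\mu_0\}$. Then Theorem~\ref{th:Agen} yields $\scal{Az_k(\theta),z_k(\theta)}=\mu_0$ for every $\theta$, and a one-line computation gives
\[
\Re(e^{i\theta}\mu_0)=\scal{\Re(e^{i\theta}A)z_k(\theta),z_k(\theta)}=\lambda_k(\theta),
\]
so $\mu_0\in\ell_{\theta,k}$ for every $\theta$, and by the previous paragraph $\Lambda_k(A)=\{\mu_0\}=\gamma_k(A)$.

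For the converse I would argue by contrapositive. If $\Lambda_k(A)=\{\mu_0\}\neq\emptyset$, then $\Re(e^{i\theta}\mu_0)=\lambda_k(\theta)$ for all $\theta$; comparing with $\Re(e^{i\theta}p(\theta))=\lambda_k(\theta)$ for $p(\theta):=\scal{Az_k(\theta),z_k(\theta)}$ forces $p(\theta)-\mu_0=is(\theta)e^{-i\theta}$ for a real-valued function $s$. Differentiating and invoking the envelope property that $p'(\theta)$ is parallel to $ie^{-i\theta}$ (the direction of the tangent line $\ell_{\theta,k}$), I would compute $e^{i\theta}p'(\theta)=s(\theta)+is'(\theta)$ and conclude that the vanishing of its real part forces $s\equiv 0$. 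Hence $p(\theta)\equiv\mu_0$, i.e., $\gamma_k(A)=\{\mu_0\}$, as needed.

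The one step that demands real care is the tangency claim $p'(\theta)\parallel ie^{-i\theta}$. I would justify it using the smoothness of $\theta\mapsto z_k(\theta)$, which holds because $A$ is generic and $\lambda_k(\theta)$ is therefore a simple eigenvalue of $\Re(e^{i\theta}A)$ for every $\theta$, together with the envelope interpretation of $\gamma_k(A)$ by the family \eqref{ell}, which is exactly what is packaged into Theorem~\ref{th:Agen}.
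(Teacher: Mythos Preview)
Your proof is correct and follows the same line as the paper, which in fact gives no separate proof of the corollary: the paper's entire argument is the sentence immediately preceding it, namely that for odd $n$ and $k=\ceil{n/2}$ the relation \eqref{op} collapses \eqref{knrint} to $\Lambda_k(A)=\bigcap_\theta \ell_{\theta,k}$, the intersection of all tangent lines to $\gamma_k(A)$. You start from exactly this reduction and then supply the details the paper leaves to the reader---in particular, your differentiation argument for the contrapositive is one clean way to verify the standard duality fact that a one-parameter family of lines (with all normal directions represented) has a common point if and only if its envelope degenerates to that point. So the approaches coincide; you have simply made explicit what the paper regards as immediate.
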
 
\noindent Both cases are illustrated by Example~\ref{ex1}. 

Corollary~\ref{th:midodd} implies that for odd $n$ the curve  $\gamma_{\ceil{n/2}}(A)$ cannot be convex unless it collapses to a single point. On the other hand, the outermost curve $\gamma_1(A)$ of $C(A)$ for a generic matrix $A$ is always convex, and thus coincides with the boundary $\partial W(A)$ of its numerical range.  This means in particular that $\partial W(A)$ does not have corners or flat portions. Other components of $C(A)$ may exhibit cusps and swallowtails but no inflection points. 

As can be seen from Fig.~\ref{fig2}, cusps (but not swallowtails) materialize already when $n=3$. The emergence of swallowtails will be demonstrated in Section~\ref{s:rec}, see Fig.~\ref{fig4by4}--\ref{fig7N}. 

Convexity of $\gamma_1(A)$ implies that the subsequent components lie strictly inside of it. This, however, does not  preclude $\gamma_j(A)$ with $j>1$ from intersecting, as soon as there are at least two of them (i.e., when $n\geq 5$ -- see Fig.~\ref{fig00} in Section~\ref{s:tri} for an example corresponding to $n=5$).  Note that this is happening in spite of strict inclusions in \eqref{chain}. 	
\section{Tridiagonal 2-periodic matrices}\label{s:tri} 
A matrix $A\in\M_n$ is {\em tridiagonal}  if $a_{ij}=0$ whenever $\abs{i-j}>1$.  We will be making use of the well known (and easy to prove) recursive relation for the determinants $\Delta_n$ of such matrices,
\eq{rec} \Delta_n=a_{nn}\Delta_{n-1}-a_{n-1,n}a_{n,n-1}\Delta_{n-2}, \en 
implying in particular that $\Delta_n$ is invariant under transpositions $a_{i+1,i}\leftrightarrow a_{i,i+1}$ of its off-diagonal pairs. 

Suppose now that these pairs are {\em unbalanced}, i.e., \eq{abs} \abs{a_{i+1,i}}\neq\abs{a_{i,i+1}} \text{ for } i=1,\ldots,n-1.\en  Then hermitian matrices $\re(e^{i\theta}A)$ will be {\em proper} tridiagonal, i.e., their entries directly above and below the main diagonal will be non-zero. According to \cite[Corollary 7]{BS041}, the eigenvalues of $\re(e^{i\theta}A)$ are simple for all $\theta$, thus implying the genericity of $A$. 

\begin{Ex}\label{5x5}
Let 
	$$
	M_3=
	\left[\begin{matrix}
		1&1& 0&0&0\\
		1/4&2&1/2&0&0\\
		0&1/4&0&3/4&0\\
		0&0&1/4&-2&1\\
		0&0&0&1/4&-1
\end{matrix}\right]. $$ \end{Ex}

This matrix is generic, since \eqref{abs} holds. According to Corollary~\ref{th:midodd}, $\Lambda_3=\emptyset$.
	\begin{figure}[h]
		\centering
		\includegraphics[width=0.5\linewidth,angle=0]{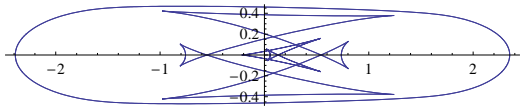}
		\caption{Kippenhahn curve of $M_3.$ Notice that $\gamma_2$ intersects $\gamma_3.$}
		\label{fig00}
	\end{figure}

We will say that a tridiagonal matrix $A$ is {\em 2-periodic} if so are the sequence of its diagonal entries and of its (non-ordered) off-diagonal pairs. For such matrices we will use the notation $a_1,a_2$ for the first two diagonal entries, and $\{b_1,c_1\}, 
\{b_2,c_2\}$ for the first two (once again, non-ordered) pairs of the off-diagonal entries. 

Along with $A$, for any $\theta$ the hermitian matrix $\re(e^{i\theta}A)$ will be 2-periodic as well, with $\alpha_j(\theta)=:\re(e^{i\theta}a_j)$ ($j=1,2$) as the period of its main diagonals. Transposing their off-diagonal pairs as needed, we may arrange for the superdiagonal to also be 2-periodic, with \eq{beta}\beta_j(\theta)=:(e^{i\theta}b_j+e^{-i\theta}\overline{c_j)}/2, \quad j=1,2\en  as the first two entries. According to \eqref{rec}, this rearrangement preserves the characteristic polynomial of $\re(e^{i\theta}A)$. Therefore, explicit formulas from \cite{Gov94} can be used to compute $\lambda_k(\theta)$ in our setting. The respective straightforward computation shows that 
\eq{lt}\lambda_{k,n-k+1}=\frac{\alpha_1+\alpha_2}{2}\pm\sqrt{\left(\frac{\alpha_1-\alpha_2}{2}\right)^2+\abs{\beta_1}^2+\abs{\beta_2}^2
+2\abs{\beta_1\beta_2}Q_k}\en 
for $k=1,\ldots,m:=\floor{n/2}$, while $\lambda_{m+1}=\alpha_1$ if $n$ is odd.

Here $Q_k=\cos\frac{k\pi}{m+1}$ if $n$ is odd, and the $k$-th (in the decreasing order) root of the $m$-th degree polynomial $q_m$ defined recursively via \eq{qrec} q_0=1,\  q_1(\mu)=\mu+\abs{\beta_2/\beta_1},  q_{k+1}(\mu)=\mu q_k(\mu)-q_{k-1}(\mu)\text{ for } k\geq 1 \en if $n$ is even. 
 
For odd $n$, directly from the formula for $\lambda_{m+1}$ we obtain 
\begin{prop}\label{th:lastodd}Let $A\in\M_n$ be tridiagonal and 2-periodic. If $n$ is odd, then $\gamma_{\ceil{n/2}}(A)=\{a_1\}$, the {\rm (1,1)}-entry of $A$. \end{prop}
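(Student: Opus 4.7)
The plan is to exhibit, for each $\theta$, an explicit eigenvector of $\re(e^{i\theta}A)$ associated with the middle eigenvalue $\lambda_{m+1}(\theta)=\alpha_1(\theta)=\re(e^{i\theta}a_1)$, and then to evaluate $\scal{Az,z}$ on it. Since \eqref{abs} holds for a 2-periodic tridiagonal matrix, $A$ is generic, so $\lambda_{m+1}(\theta)$ is simple for every $\theta$; hence the eigenvector is unique up to scaling and the set $\gamma_{m+1}(A)$ is automatically a single closed curve (possibly a point). The formula for $\lambda_{m+1}$ stated just after \eqref{lt} is what gets the argument going.

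Concretely, $\re(e^{i\theta}A)$ is a Hermitian tridiagonal matrix of order $n=2m+1$ with 2-periodic diagonal $\alpha_1,\alpha_2,\alpha_1,\alpha_2,\ldots,\alpha_1$ and superdiagonal entries $\tilde\beta_i(\theta)=(e^{i\theta}A_{i,i+1}+e^{-i\theta}\overline{A_{i+1,i}})/2$. Assumption \eqref{abs} forces each $\tilde\beta_i(\theta)$ to be nonzero for every $\theta$, as the equation $\tilde\beta_i(\theta)=0$ would imply $\abs{A_{i,i+1}}=\abs{A_{i+1,i}}$. I would then look for an eigenvector $z=(z_1,\ldots,z_n)$ for the eigenvalue $\alpha_1$ under the ansatz $z_i=0$ whenever $i$ is even. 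At every odd interior index $i$ both neighbouring components vanish and the diagonal entry is already $\alpha_1$, so the eigen-equation is automatic; the same holds at the boundary rows $i=1$ and $i=n$, which are odd because $n$ is odd. At each even $i$, the eigen-equation collapses to $\overline{\tilde\beta_{i-1}(\theta)}z_{i-1}+\tilde\beta_i(\theta)z_{i+1}=0$, and since $\tilde\beta_i(\theta)\neq 0$ these relations can be solved recursively from any nonzero $z_1$ to produce a nonzero $z(\theta)$.

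With this eigenvector in hand, the final step is a direct calculation using the tridiagonality of $A$. Because $z_i=0$ at every even $i$, the rows of $A$ at even indices contribute zero to $\scal{Az,z}$. At every odd $i$, tridiagonality gives $(Az)_i=A_{i,i-1}z_{i-1}+a_1 z_i+A_{i,i+1}z_{i+1}=a_1 z_i$, since both $z_{i-1}$ and $z_{i+1}$ vanish. Summing yields $\scal{Az(\theta),z(\theta)}=a_1\sum_{i\text{ odd}}\abs{z_i}^2=a_1\norm{z(\theta)}^2$ for every $\theta$, and therefore $\gamma_{m+1}(A)=\{a_1\}$. The only nontrivial point in the argument is the existence step, i.e., that the ansatz $z_{\text{even}}=0$ is consistent with the boundary rows; this is precisely where the hypothesis of \emph{odd} $n$ is used, so that $z_1$ and $z_n$ sit at odd positions and the boundary equations become trivialities.
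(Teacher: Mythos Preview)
Your argument is correct and takes a genuinely different route from the paper. The paper simply invokes the eigenvalue formula $\lambda_{m+1}(\theta)=\alpha_1(\theta)=\re(e^{i\theta}a_1)$ coming from Gover's formulas and reads off the conclusion from the dual (tangent-line) description of the Kippenhahn curve: every line $\ell_{\theta,m+1}$ passes through $a_1$, so their envelope collapses to that single point. You instead work through the primal parameterization of $\gamma_{m+1}$ given in Theorem~\ref{th:Agen}, building the eigenvector explicitly (supported on the odd coordinates) and computing $\scal{Az,z}$ by hand. Your route is longer but more self-contained --- it does not appeal to \cite{Gov94} and in fact rederives $\lambda_{m+1}=\alpha_1$ along the way --- whereas the paper's is a one-liner once those eigenvalue formulas are accepted.

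One wording quibble: the phrase ``since \eqref{abs} holds for a 2-periodic tridiagonal matrix'' reads as if \eqref{abs} were automatic, which it is not; it is the standing hypothesis of the section. Your argument genuinely uses it (both to guarantee $\tilde\beta_i(\theta)\neq 0$ in the recursion and to ensure simplicity of $\lambda_{m+1}$ so that $z_{m+1}(\theta)$ is well defined), so state it as an assumption rather than a consequence.
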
 
According to Corollary~\ref{th:midodd}, for such matrices $\Lambda_{\ceil{n/2}}(A)=\{a_1\}$. Also, by Proposition~\ref{th:lastodd} a 2-periodic tridiagonal matrix $A\in\M_5$ cannot have intersecting $\gamma_2$ and $\gamma_3$. For $n=6$, however, this becomes a possibility; see Fig.~\ref{fig6N} in Section~\ref{s:rec}.
 
The parameters $Q_k$ are explicit and constant when $n$ is odd, and implicit (and in general depending on $\theta$) if $n$ is even. This makes consideration of even-sized matrices much harder. However, in the case \eq{AAS} \overline{b_1}c_2=c_1\overline{b_2}\en  treated in \cite{AAS}, the ratio $\abs{\beta_2/\beta_1}$ is the same as $\abs{b_2/b_1}$ and thus $\theta$-independent. According to \eqref{qrec}, $Q_k$ then do not depend on $\theta$ for even $n$ as well. Formulas \eqref{lt}, with some addtional nontrivial computations, provide an alternative approach to the complete description of  rank-$k$ numerical ranges of 2-periodic tridiagonal matrices satisfying \eqref{AAS}. In agreement with \cite{AAS}, they all happen to be elliptical disks. 

Condition \eqref{AAS} holds in particular for tridiagonal Toeplitz matrices. If in addition either the super- or the subdiagonal vanishes, then the dependence on $\theta$ disappears in \eqref{lt} alltogether. In other words, $\gamma_k$ are then concentric circles, and $\Lambda_k(A)$ the respective circular disks. This covers the result on shift operators from \cite{Gaa}.  

\begin{Ex} To illustrate other possible shapes of Kippenhahn curves for 2-periodic tridiagonal matrices, let $M_4\in\M_7$ have the zero main diagonal and $b_1=3, b_2=6, c_1=c_2=2$. \end{Ex} 
\begin{figure}[h]
\centering
\includegraphics[width=0.9\linewidth,angle=0]{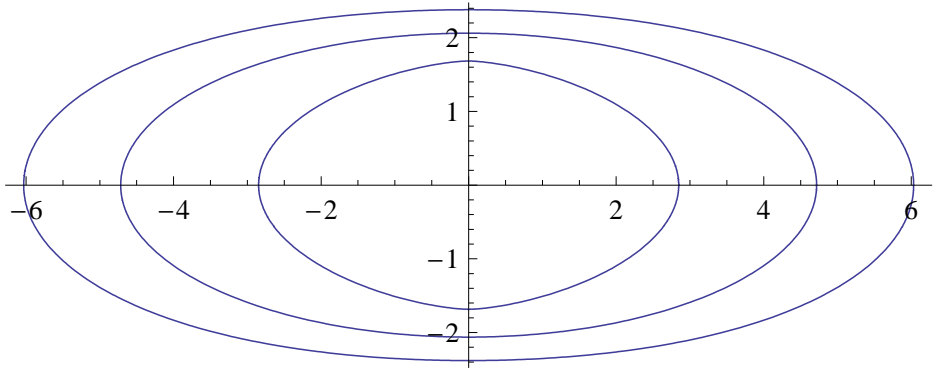}
\caption{Kippenhahn curve of $M_4$} 
\label{figure7NR}
\end{figure}

See the next section for more specific examples.

\section{Reciprocal matrices} \label{s:rec}
	
Recall the notion of reciprocal matrices introduced in \cite{BPSV}. These are tridiagonal matrices with constant (without loss of generality, zero) main diagonal and the off diagonal pairs satisfying $a_{i+1,i}a_{i,i+1}=1$.  Reciprocal matrices are of course proper tridiagonal. Denoting $\abs{a_{j+1,j}}^2+\abs{a_{j,j+1}}^2=:2A_j$ we see that $A_j\geq 1$. Condition \eqref{abs} for such matrices takes the form $A_j>1$, $j=1,\ldots,n-1$. 

A 2-periodic reciprocal matrix $A$ is completely characterized by its size $n$ and the values $a_1:=\abs{a_{12}}, a_2:=\abs{a_{23}}$ (alternatively, by $A_1$ and $A_2$). For $n\geq 4$ (the only interesting setting), $\im A$ has multiple eigenvalues if $A_1$ or $A_2$ is equal to one, and so conditions $A_1,A_2>1$ are not only sufficient but also necessary for $A$ to be generic. 

Moreover, for reciprocal matrices \eqref{beta} yields $\abs{\beta_j}=\sqrt{(A_j+\tau)/2}$, where $\tau=\cos(2\theta)$. So, according to \eqref{lt} 
$\lambda_{k,n-k+1}$ in this case are the square roots of 
\eq{zeta} \zeta_k=\frac{1}{2}(A_1+A_2+2\tau)+\sqrt{(A_1+\tau)(A_2+\tau)}Q_k, \quad j=1,\ldots,m. \en

Observe that the right hand side of \eqref{zeta} is invariant under the substitutions $\theta\mapsto -\theta$ and $\theta\mapsto\theta+\pi$. Thus, we arrive at the following

\begin{cor}\label{co:sym}Let $A\in\M_n$ be a 2-periodic reciprocal matrix. Then each component $\gamma_1,\ldots\gamma_m$  of its Kippenhahn curve $C(A)$, and consequently its rank-$k$ numerical ranges $\Lambda_k(A)$ for $k=1,\ldots,m$, are symmetric with respect to both horizontal and vertical coordinate axes. Also, $\gamma_{m+1}=\Lambda_{m+1}=\{0\}$ if $n$ is odd.  \end{cor}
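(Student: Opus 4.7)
My plan is to cash in the already-noted invariance of the right-hand side of \eqref{zeta} under $\theta\mapsto-\theta$ and $\theta\mapsto\theta+\pi$, which passes to each of the eigenvalue branches $\lambda_k(\theta)$ for $k=1,\ldots,m$. The only subtle point in doing so is a branch-identification step: for $k\leq m$ the largest $m$ eigenvalues of $\re(e^{i\theta}A)$ are the positive square roots $\sqrt{\zeta_k(\theta)}$, which requires $\zeta_k(\theta)\geq 0$. This follows from the AM--GM bound $A_1+A_2+2\tau\geq 2\sqrt{(A_1+\tau)(A_2+\tau)}$ combined with $|Q_k|<1$ (the latter being automatic for $k=1,\ldots,m$ by the structure of $q_m$ and of the cosines in the odd case). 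Once $\lambda_k(-\theta)=\lambda_k(\theta+\pi)=\lambda_k(\theta)$ is secured, the combined identity $\lambda_k(\pi-\theta)=\lambda_k(\theta)$ follows by juxtaposition.

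Symmetry of $\Lambda_k(A)$ is then an immediate substitution in \eqref{knrint}: given $\mu\in\Lambda_k(A)$, one replaces $\theta$ by $-\theta$ and by $\pi-\theta$ in the defining inequality $\re(e^{i\theta}\mu)\leq\lambda_k(\theta)$ and uses $\re(e^{-i\theta}\mu)=\re(e^{i\theta}\bar\mu)$ together with $\re(e^{i(\pi-\theta)}\mu)=-\re(e^{-i\theta}\mu)=\re(e^{i\theta}(-\bar\mu))$, concluding that both $\bar\mu$ and $-\bar\mu$ lie in $\Lambda_k(A)$. Symmetry of $\gamma_k(A)$ is then inherited via Theorem~\ref{th:Agen}: the same two identities show that reflection across the real axis sends $\ell_{\theta,k}$ to $\ell_{-\theta,k}$, and reflection across the imaginary axis sends $\ell_{\theta,k}$ to $\ell_{\pi-\theta,k}$. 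In either case the family of tangent lines is merely reparametrized, hence its envelope $\gamma_k(A)$ is preserved.

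The final assertion for odd $n$ is handled separately: Proposition~\ref{th:lastodd} gives $\gamma_{m+1}(A)=\{a_1\}$, and $a_1=0$ by the definition of a reciprocal matrix; Corollary~\ref{th:midodd} then forces $\Lambda_{m+1}(A)=\{0\}$ as well. The main hurdle I foresee is the bookkeeping of eigenvalue branches described above; once that is in place, the rest of the argument consists of purely formal substitutions in \eqref{knrint} and in the equations of the tangent lines \eqref{ell}.
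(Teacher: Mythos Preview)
Your proof is correct and follows the same route as the paper, which simply records that $\zeta_k$ depends on $\theta$ only through $\tau=\cos 2\theta$ and declares the corollary an immediate consequence. One remark: your branch-identification step is in fact unnecessary---since the entire spectrum of $\Re(e^{i\theta}A)$ depends only on $\tau$, so does its $k$-th largest element $\lambda_k(\theta)$, with no need to verify $\zeta_k\geq 0$ separately (and the auxiliary claim $|Q_k|<1$ for even $n$ is not evident from the recursion \eqref{qrec} alone, so it is just as well that you do not actually need it).
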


Furthermore, $\gamma_k$ is an ellipse if and only if $\zeta_k=x\tau+y$ with some constant $y>x>0$. 
If $A_1=A_2:=A$, this happens to be the case for all $k$, since then
\[ \zeta_k= (A+\tau)(1+Q_k),\]
with $Q_k$ constant (note that \eqref{AAS} holds in a trivial way). So, the rank-$k$ numerical ranges of such matrices are elliptical disks with the boundaries $\{\gamma_k\}_{k=1}^m$ forming a family of nested ellipses whose axes are coincident with the coordinate axes.  

On the contrary, when $A_1\neq A_2$ we have  

\begin{thm}\label{th:onel}Let $A$ be a 2-periodic reciprocal matrix of odd size $n$ and $A_1\neq A_2$. Then none of its rank-$k$ numerical ranges has an elliptical shape if $n=1\mod 4$. Otherwise, exactly one of them, namely $\Lambda_{(n+1)/4}(A)$, is an elliptical disk.  \end{thm}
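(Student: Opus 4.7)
The plan is to combine the closed-form \eqref{zeta} for $\zeta_k$ with the ellipticity criterion recorded just above the theorem: $\gamma_k$ is an ellipse precisely when $\zeta_k=x\tau+y$ with constants $y>x>0$. That criterion itself I would justify via the support-function identity $\sqrt{x\cos(2\theta)+y}=\sqrt{a^2\cos^2\theta+b^2\sin^2\theta}$, with $a^2=y+x$, $b^2=y-x$, so that Theorem~\ref{th:Agen} identifies $\Lambda_k(A)$ with the corresponding elliptical disk.

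Rewriting \eqref{zeta} as
\[
\zeta_k=\tau+\frac{A_1+A_2}{2}+Q_k\sqrt{(A_1+\tau)(A_2+\tau)},
\]
my first key step would be to observe that affineness in $\tau$ forces either $Q_k=0$ or the radicand $\tau^2+(A_1+A_2)\tau+A_1A_2$ to be a perfect square in $\tau$. The latter has discriminant $(A_1-A_2)^2$, which is nonzero by the hypothesis $A_1\neq A_2$. Thus everything reduces to determining the $k\in\{1,\dots,m\}$, with $m=(n-1)/2$, for which $Q_k=\cos\bigl(k\pi/(m+1)\bigr)=0$.

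The unique candidate $k=(m+1)/2$ is admissible (a positive integer at most $m$) exactly when $m$ is odd, i.e.\ $n\equiv 3\pmod 4$, in which case it equals $(n+1)/4$. When $n\equiv 1\pmod 4$ no admissible $k$ exists, and the top-rank case $k=m+1$ is only the singleton $\{0\}$ by Proposition~\ref{th:lastodd} together with Corollary~\ref{th:midodd}, so no rank-$k$ range is elliptical. When $n\equiv 3\pmod 4$ and $k=(n+1)/4$, formula \eqref{zeta} collapses to $\zeta_k=\tau+(A_1+A_2)/2$, yielding $x=1$, $y=(A_1+A_2)/2$; the requirement $y>x>0$ then follows at once from the genericity bounds $A_1,A_2>1$ noted at the start of Section~\ref{s:rec}, while no other $k\le m$ can give an ellipse since $Q_k\neq 0$ there.

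The only step I expect to require careful bookkeeping is the first one, namely cleanly establishing that $\gamma_k$ being an ellipse forces $\zeta_k$ to be affine in $\tau$ (rather than just the converse); once that support-function reformulation is in place, the remainder of the argument is elementary arithmetic modulo~$4$ and direct substitution into \eqref{zeta}.
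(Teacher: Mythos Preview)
Your approach is correct and matches the paper's own proof, which likewise reduces the question to determining when $Q_k=\cos\frac{k\pi}{m+1}=0$ via the ellipticity criterion stated just before the theorem. Your version is more carefully filled in: the paper simply asserts that the second summand in \eqref{zeta} is of the desired affine form only if $Q_k=0$, without spelling out the discriminant argument using $A_1\neq A_2$, verifying $y>x>0$, or addressing the $k=m+1$ case.
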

\begin{proof}The first summand in the right hand side of \eqref{zeta} is of desired form. The second term, however, is such only if $Q_k=0$. Since $Q_k=\cos\frac{k\pi}{m+1}$ for odd $n$, the result follows. 
\end{proof}
Observe that for generic 4-by-4 matrices $\gamma_1$ and $\gamma_2$ (consequently, $\Lambda_1$ and $\Lambda_2$) are elliptical only simultaneously. Recall also that the numerical range of a reciprocal matrix $A\in\M_4$ is elliptical if and only if 
\eq{gold} A_2=\phi A_1-\phi^{-1}A_3 \text{ or } A_2=\phi A_3-\phi^{-1}A_1, \en
where $\phi$ is the golden ratio, and at least one of the inequalities $A_j\geq 1$ is strict \cite[Theorem 7]{BPSV}. If $A$ in addition is 2-periodic, i.e. $A_1=A_3$, then \eqref{gold} implies $A_2=A_1$. In other words, neither of rank-$k$ numerical ranges of such $A$ is elliptical, unless $A_1=A_2$. 

We suspect that this is the case for generic 2-periodic reciprocal matrices $A\in\M_n$ for all even $n>2$, not just $n=4$. Formulas \eqref{zeta} should be instrumental in proving this conjecture; the difficulty lies in the implicit nature of $Q_k$ for even values of $n$. 
	
Kippenhahn curves of several reciprocal matrices are pictured below. The matrices are described by the triples $\{n,\abs{a_1},\abs{a_2}\}$, or $\{n,A_1,A_2\}$. In Fig.~\ref{fig6x6}, \ref{fig6N} and \ref{fig7NC}, the dotted curves are the best fitting ellipses to the components of $C(A)$ which look elliptical but in fact are not.

\begin{figure}[p]
	\centering
	\includegraphics[width=0.75\linewidth,angle=0]{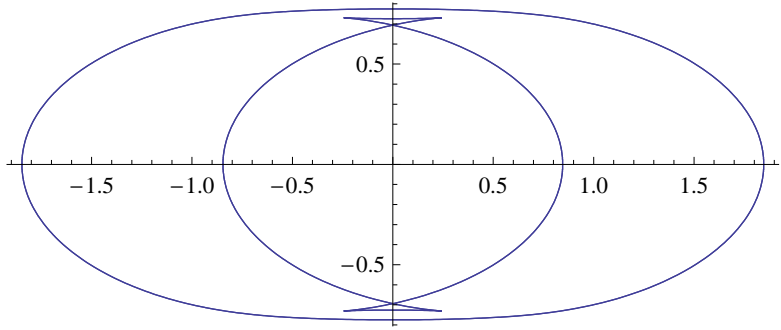}
	\caption{$n=4,a_1=2,a_2=21/20$. The numerical range $\Lambda_1$ is bounded by the exterior component, while  $\Lambda_2$ is bounded by the interior component with its swallowtails removed; $\Lambda_3=\emptyset$.}
	\label{fig4by4}
\end{figure}

\begin{figure}[p]
	\centering
	\includegraphics[width=0.75\linewidth,angle=0]{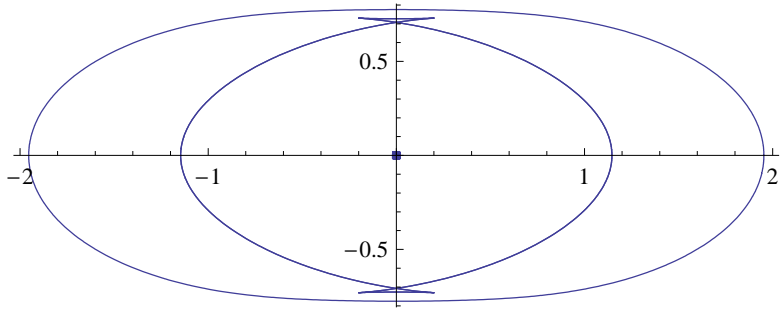}
	\caption{$n=5,a_1=2,a_2=21/20$. The picture is similar to Fig.~\ref{fig4by4}, except that now $\Lambda_3=\{0\}$.}
	\label{fig5by5}
\end{figure}

\begin{figure}[p]
\centering
\includegraphics[width=0.9\linewidth,angle=0]{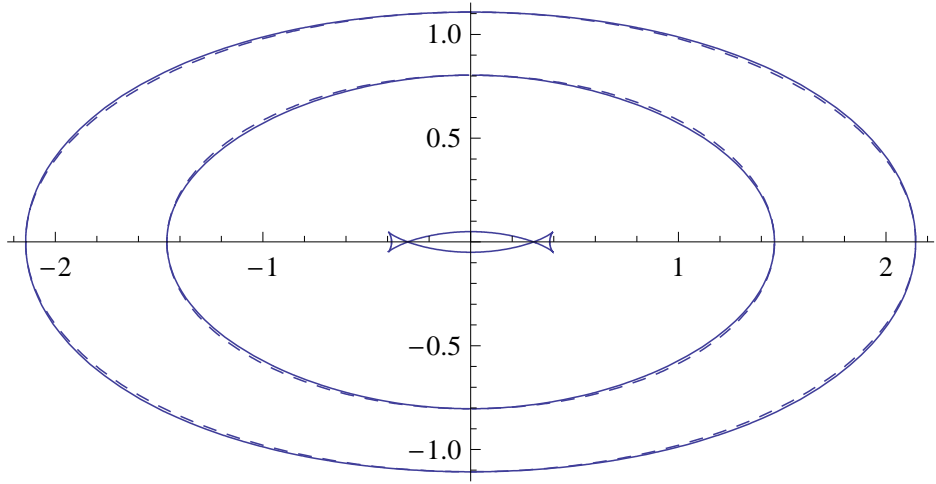}
\caption{$n=6,A_1=1.25, A_2=1.5$. The components of $C(A)$ are nested, with $\gamma_1$ and $\gamma_2$ being convex and so coinciding with the boundaries of $\Lambda_1,\Lambda_2$, respectively. On the other hand, $\Lambda_3$ is bounded by the ``middle portion'' of $\gamma_3$.}

\label{fig6x6}
\end{figure}

\begin{figure}[p]
\centering
\includegraphics[width=0.9\linewidth,angle=0]{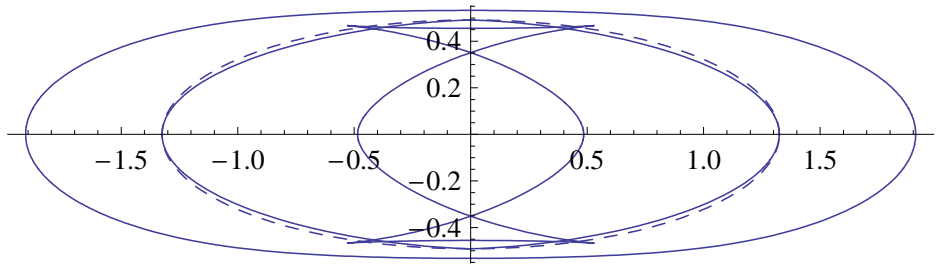}
\caption{$n=6,A_1=1.05, A_2=1.62$. The component $\gamma_1$ and $\gamma_2$ are still convex. As opposed to Fig.~\ref{fig6x6}, $\gamma_3$ is intersecting with $\gamma_2$. }
\label{fig6N}
\end{figure}
\begin{figure}[p]
\centering
\includegraphics[width=0.9\linewidth,angle=0]{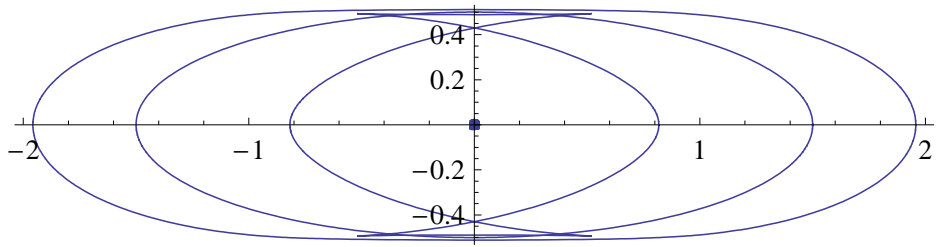}
\caption{$n=7,A_1=1.05,A_2=1.62$. The picture is similar to Fig.~\ref{fig6N}, except that $\gamma_2$ is an exact ellipse, and there emerges $\gamma_4=\{0\}$. }
\label{fig7N}
\end{figure}

\begin{figure}[t]
\centering
\includegraphics[width=0.9\linewidth,angle=0]{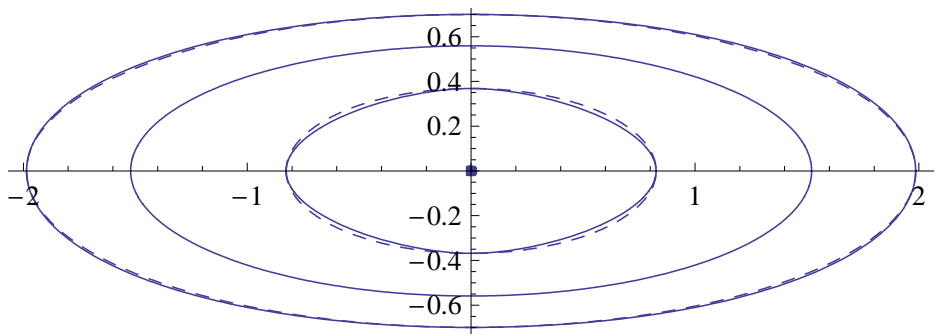}
\caption{$n=7, A_1=2, A_2=1.5$. The components $\gamma_j$ are convex for $j=1,2,3$ and visually indistinguishable from ellipses, though only the middle one is a genuine ellipse.}
\label{fig7NC}
\end{figure}

\newpage

\providecommand{\bysame}{\leavevmode\hbox to3em{\hrulefill}\thinspace}
\providecommand{\MR}{\relax\ifhmode\unskip\space\fi MR }
\providecommand{\MRhref}[2]{%
	\href{http://www.ams.org/mathscinet-getitem?mr=#1}{#2}
}
\providecommand{\href}[2]{#2}

\end{document}